\begin{document}

\newtheorem{theorem}{Theorem}
\newtheorem{lemma}[theorem]{Lemma}
\newtheorem{claim}[theorem]{Claim}
\newtheorem{cor}[theorem]{Corollary}
\newtheorem{prop}[theorem]{Proposition}
\newtheorem{definition}{Definition}
\newtheorem{question}[theorem]{Question}

\def\cA{{\mathcal A}}
\def\cB{{\mathcal B}}
\def\cC{{\mathcal C}}
\def\cD{{\mathcal D}}
\def\cE{{\mathcal E}}
\def\cF{{\mathcal F}}
\def\cG{{\mathcal G}}
\def\cH{{\mathcal H}}
\def\cI{{\mathcal I}}
\def\cJ{{\mathcal J}}
\def\cK{{\mathcal K}}
\def\cL{{\mathcal L}}
\def\cM{{\mathcal M}}
\def\cN{{\mathcal N}}
\def\cO{{\mathcal O}}
\def\cP{{\mathcal P}}
\def\cQ{{\mathcal Q}}
\def\cR{{\mathcal R}}
\def\cS{{\mathcal S}}
\def\cT{{\mathcal T}}
\def\cU{{\mathcal U}}
\def\cV{{\mathcal V}}
\def\cW{{\mathcal W}}
\def\cX{{\mathcal X}}
\def\cY{{\mathcal Y}}
\def\cZ{{\mathcal Z}}

\def\A{{\mathbb A}}
\def\B{{\mathbb B}}
\def\C{{\mathbb C}}
\def\D{{\mathbb D}}
\def\E{{\mathbb E}}
\def\F{{\mathbb F}}
\def\G{{\mathbb G}}
\def\I{{\mathbb I}}
\def\J{{\mathbb J}}
\def\K{{\mathbb K}}
\def\L{{\mathbb L}}
\def\M{{\mathbb M}}
\def\N{{\mathbb N}}
\def\O{{\mathbb O}}
\def\P{{\mathbb P}}
\def\Q{{\mathbb Q}}
\def\R{{\mathbb R}}
\def\S{{\mathbb S}}
\def\T{{\mathbb T}}
\def\U{{\mathbb U}}
\def\V{{\mathbb V}}
\def\W{{\mathbb W}}
\def\X{{\mathbb X}}
\def\Y{{\mathbb Y}}
\def\Z{{\mathbb Z}}

\def\ep{{\mathbf{e}}_p}
\def\eq{{\mathbf{e}}_q}
\def\eqf{{\mathbf{e}}_{q_f}}

\def\es{{\mathbf{e}}_s}

\def\scr{\scriptstyle}
\def\\{\cr}
\def\({\left(}
\def\){\right)}
\def\[{\left[}
\def\]{\right]}
\def\<{\langle}
\def\>{\rangle}
\def\fl#1{\left\lfloor#1\right\rfloor}
\def\rf#1{\left\lceil#1\right\rceil}
\def\le{\leqslant}
\def\ge{\geqslant}
\def\eps{\varepsilon}
\def\mand{\qquad\mbox{and}\qquad}

\def\vec#1{\mathbf{#1}}
\def\inv#1{\overline{#1}}
\def\vol#1{\mathrm{vol}\,{#1}}
\def\dist{\mathrm{dist}}

\def\SL{\mathrm{SL}}

\def\Hba{\overline{\cH}_{a,m}}
\def\Hta{\widetilde{\cH}_{a,m}}
\def\Hb1{\overline{\cH}_{m}}
\def\Ht1{\widetilde{\cH}_{m}}

\def\Zm{\Z/m\Z}

\def \tX{\widetilde{X}}
\def\tY{\widetilde{Y}}
\def\tZ{\widetilde{Z}}

\def\Err{{\mathbf{E}}}

\newcommand{\comm}[1]{\marginpar{%
\vskip-\baselineskip 
\raggedright\footnotesize
\itshape\hrule\smallskip#1\par\smallskip\hrule}}

\def\xxx{\vskip5pt\hrule\vskip5pt}


\title{On Small Solutions to Quadratic Congruences}

\author{
{\sc Igor E. Shparlinski} \\
{Department of Computing, Macquarie University} \\
{Sydney, NSW 2109, Australia} \\
{igor@ics.mq.edu.au}}

\date{\today}
\pagenumbering{arabic}

\maketitle

\begin{abstract}
We estimate the deviation of  the number of solutions 
of the congruence
$$
m^2-n^2 \equiv c \pmod q, \qquad 1 \le m \le M, \ 1\le n \le N,
$$ 
from its expected value
on average over $c=1, \ldots, q$. This estimate is motivated by the recently established
by D.~R. Heath-Brown connection between the distibution of solution to
this congruence  
and the pair correlation problem for the fractional 
parts of the quadratic function $\alpha k^2$, $k=1,2,\ldots$ 
with a real $\alpha$. 
\end{abstract}

\paragraph{Subject Classification (2010)} 11D79, 11J71, 11L07

\paragraph{Keywords} quadratic congruences, exponential sums

\section{Introduction}

For positive integers  $M$, $N$ and $q$ and 
an arbitrary integer $c$, we denote 
$$
A(M,N;q,c) = 
\# \{1\le m \le M, \ 1 \le n \le N~:~ m^2-n^2 \equiv c \pmod q\}.
$$

We also put $A_0(q,c) = A(q,q;q,c)$ and 
define
$$
\Delta(M,N;q,c) = \left|A(M,N;q,c) -
 \frac{MN}{q^2} A_0(q,c)\right|.
$$

It has been shown by Heath-Brown~\cite[Lemma~3]{HB} that 
the bound
\begin{equation}
\label{eq:HB bound}
\sum_{c=1}^q \Delta(N,N;q,c) ^2 \le q^{4/3 + o(1)} r^3,
\end{equation}
holds for $N \le q^{2/3}$, where 
$$
r = \prod_{p=2~\text{or}~\alpha_p>1} p^{\alpha_p}
$$
and 
$$
q= \prod_{p\mid q} p^{\alpha_p}
$$
is the prime number factorisation of $q$.  This estimate is
a part of the suggested in~\cite{HB}  approach to 
the pair correlation problem for the fractional 
parts of the quadratic function $\alpha k^2$, $k=1,2,\ldots$ 
with a real $\alpha$. 

Here we use a different method that leads to an estimate
which improves and generalises~\eqref{eq:HB bound} 
for most of the values of the parameters $M$ and $N$. 
However, in the case of $M, N = q^{2/3 + o(1)}$,
which appears in the  applications pair correlation problem
both bounds are of essentially the same type (except for the
extra factor of $r^3$ in~\eqref{eq:HB bound}, which, however, 
is small for a ``typical'' $q$).

On the other hand, studying the  distribution of solutions to 
the congruence $m^2-n^2 \equiv c \pmod q$, in particular, estimating 
$\Delta(M,N;q,c)$ individually and on average, is of independent interest.

Since there does not seem to be any immediate implications of our estimate
for the pair correlation problem, we present it only in the case of odd $q$.
For even $q$, one can easily obtain a similar result at the cost of some
minor technical changes.

\begin{theorem}
\label{thm:Quad Cong Aver} For any odd $q\ge 1$ and  positive integers $M,N\le q$, 
we have
$$
\sum_{c=1}^q \Delta(M,N;q,c) ^2
\le (M+N)^2  q^{o(1)}.
$$
\end{theorem}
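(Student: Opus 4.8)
The plan is to pass to additive characters and reduce the whole statement to a fourth-moment estimate for incomplete quadratic Gauss sums. Writing $\eq(x)=\exp(2\pi i x/q)$ and
$$
S(a;K)=\sum_{k=1}^{K}\eq(ak^2),
$$
one has $A(M,N;q,c)=q^{-1}\sum_{a=0}^{q-1}S(a;M)\overline{S(a;N)}\,\eq(-ac)$ and $A_0(q,c)=q^{-1}\sum_{a=0}^{q-1}|S(a;q)|^2\eq(-ac)$. The $a=0$ terms contribute exactly $MN/q$ to each of $A(M,N;q,c)$ and $\frac{MN}{q^2}A_0(q,c)$, so they cancel in the difference. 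Hence, putting
$$
F(a)=S(a;M)\overline{S(a;N)}-\frac{MN}{q^2}|S(a;q)|^2,
$$
we have $F(0)=0$, and orthogonality of the characters $c\mapsto\eq(-ac)$ over $c=1,\dots,q$ yields the exact identity
$$
\sum_{c=1}^{q}\Delta(M,N;q,c)^2=\frac1q\sum_{a=1}^{q-1}|F(a)|^2,
$$
so everything reduces to a mean-square bound for $F(a)$.

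The crucial point is that $\frac{MN}{q^2}|S(a;q)|^2$ is precisely the main term of $S(a;M)\overline{S(a;N)}$, so the estimate must retain cancellation: the crude inequality $|F|^2\le 2|S(a;M)|^2|S(a;N)|^2+2\frac{M^2N^2}{q^4}|S(a;q)|^4$ is too lossy, since for odd $q$ one has $|S(a;q)|^2=\gcd(a,q)\,q$ and $\frac{M^2N^2}{q^4}\sum_{a\ne0}|S(a;q)|^4$ is already of order $M^2N^2/p$, far above the target. To keep the cancellation I would complete each incomplete sum: for $\gcd(a,q)=d$ and $q_1=q/d$ one has $S(a;K)=G\cdot\Psi_K$, where $G$ is a complete quadratic Gauss sum of absolute value $q_1^{1/2}$ and $\Psi_K$ is the dual sum whose zero frequency equals $K/q_1$. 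Since $S(a;q)=d\,G$, tracking the zero frequency shows the main term of the incomplete sum is $\frac{K}{q}S(a;q)$; writing $R(a;K)=S(a;K)-\frac{K}{q}S(a;q)$ and expanding the product gives the clean decomposition
$$
F(a)=\frac{M}{q}S(a;q)\overline{R(a;N)}+\frac{N}{q}\overline{S(a;q)}R(a;M)+R(a;M)\overline{R(a;N)}.
$$

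I would then bound $\frac1q\sum_{a\ne0}|F(a)|^2$ by the sum of the mean squares of the three terms. For the first two I would use $|S(a;q)|^2=\gcd(a,q)\,q$ together with the mean-square estimate $\sum_{\gcd(a,q)=d}|R(a;K)|^2\le \frac{q}{d}\min(K,q/d)\,q^{o(1)}$, obtained by completing, expanding $|R|^2$, and summing over $a$ to produce a Ramanujan sum $c_{q_1}(b^2-b'^2)$ whose diagonal $b'=\pm b$ dominates; summing over the divisors $d\mid q$ then gives $\le M^2q^{o(1)}$ and $\le N^2q^{o(1)}$ respectively, using $\tau(q)=q^{o(1)}$ and $M,N\le q$. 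For the third term I would expand $|R(a;M)|^2|R(a;N)|^2$ into a four-fold sum over nonzero frequencies $b_1,b_1',b_2,b_2'$, weighted by the Fourier coefficients $\hat J_M(b),\hat J_N(b)$ of the indicator functions of $[1,M],[1,N]$ taken modulo $q_1$ (so $\hat J_K(0)=K/q_1$), and a Ramanujan sum $c_{q_1}(b_1^2-b_1'^2-b_2^2+b_2'^2)$. The diagonal $b_1'=\pm b_1,\ b_2'=\pm b_2$ contributes $\sum_{d\mid q}\frac1d\min(M,q/d)\min(N,q/d)\le MNq^{o(1)}$ via the Parseval bound $\sum_{b\ne0}|\hat J_K(b)|^2\le \min(K,q_1)/q_1$. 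Altogether the diagonal terms total $\le(M+N)^2q^{o(1)}$, which is the source of the main term.

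The main obstacle is the off-diagonal contribution, namely the frequencies with $b_1^2-b_1'^2-b_2^2+b_2'^2\equiv 0\pmod e$ for proper divisors $e\mid q_1$, weighted by the Ramanujan sums. Here I would use $|c_{q_1}(\lambda)|\le\gcd(\lambda,q_1)$ and the identity $\gcd(\lambda,q_1)=\sum_{e\mid(\lambda,q_1)}\phi(e)$, and for each $e$ estimate the solution count of the quadratic congruence $b_1^2-b_1'^2\equiv b_2^2-b_2'^2\pmod e$ weighted by $|\hat J_M|$ and $|\hat J_N|$. Controlling this count uniformly, and absorbing the divisor sums over $e\mid q_1$ and over the scales $d\mid q$ into the factor $q^{o(1)}$ while never exceeding the diagonal bound, is the delicate step; it relies on $\tau(q)=q^{o(1)}$, the decay $\sum_{b\ne0}|\hat J_K(b)|\ll\log q$, and standard estimates for the number of points modulo $e$ on the quadric $b_1^2-b_1'^2=b_2^2-b_2'^2$.
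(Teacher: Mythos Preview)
Your reduction via Parseval to $\frac1q\sum_{a\ne0}|F(a)|^2$ and the decomposition
$F(a)=\frac{M}{q}S(a;q)\overline{R(a;N)}+\frac{N}{q}\overline{S(a;q)}R(a;M)+R(a;M)\overline{R(a;N)}$
are correct, and your treatment of the first two summands is sound: the second moment bound
$\sum_{\gcd(a,q)=d}|R(a;K)|^2\ll (q/d)\min(K,q/d)\,q^{o(1)}$ follows exactly as you indicate,
and together with $|S(a;q)|^2=\gcd(a,q)\,q$ it yields $M^2q^{o(1)}$ and $N^2q^{o(1)}$.

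The gap is in the third term. What you call ``the delicate step'' is not merely delicate, it is
essentially the theorem itself. Take $M=N$. Then $F(a)=|R(a;M)|^2+\tfrac{2M}{q}\mathrm{Re}\bigl(R(a;M)\overline{S(a;q)}\bigr)$,
and using your own second moment bound for $R$ together with $|S(a;q)|^2=\gcd(a,q)q$ one checks
that $\frac1q\sum_{a\ne0}|R(a;M)|^4$ and $\sum_c\Delta(M,M;q,c)^2$ differ by $O(M^2q^{o(1)})$.
So bounding $\frac1q\sum_{a\ne0}|R(a;M)|^2|R(a;N)|^2$ by $(M+N)^2q^{o(1)}$ (even after Cauchy--Schwarz,
which reduces it to the fourth moment) is equivalent to the $M=N$ case of the theorem you are proving.
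The tools you list --- $|c_{q_1}(\lambda)|\le\gcd(\lambda,q_1)$, $\sum_{b\ne0}|\hat J_K(b)|\ll\log q$,
and the point count on the quadric $b_1^2-b_1'^2=b_2^2-b_2'^2$ modulo~$e$ --- are too crude:
already for $q$ prime, fixing one variable and counting the $O(q^2)$ solutions in the other three
gives $\frac1q\sum_{a\ne0}|R(a;M)|^4\ll M^3q^{o(1)}$, a factor $M$ above the target, and the
pointwise bound $|R(a;K)|\ll q_1^{1/2}\log q$ loses a full factor $q$.
No amount of absorbing divisor sums into $q^{o(1)}$ fixes this; real extra cancellation is needed.

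The paper avoids this circularity altogether by exploiting the factorisation $m^2-n^2=(m+n)(m-n)$
at the outset. The change of variables $x=m+n$, $y=m-n$ turns the quadratic congruence into
$xy\equiv c\pmod q$ with $(x,y)$ in a skew box, and after splitting according to $f=\gcd(x,q)$
the problem becomes an instance of the author's earlier mean-square estimate for products in
residue classes (Lemma~\ref{lem:prod}, extending \cite[Theorem~1]{Shp}). That lemma directly
provides the saving your off-diagonal step is missing; the multiplicative structure is what makes
it accessible.
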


\section{Preliminaries}
\label{sec:prelim} 

As usual, we use $\varphi(k)$ to denote the Euler function
and $\tau(k)$ to denote the divisor function.

\begin{lemma}
\label{lem:A0}
If $q$ is odd and $\gcd(c,q)=d$ then 
$$
A_0(q,c) =  \sum_{f\mid d} f \varphi(q/f).
$$
\end{lemma}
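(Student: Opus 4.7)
Plan of proof. The key observation is that for odd $q$ the map $(m,n) \mapsto (u,v) = (m+n, m-n)$ is a bijection from $(\Z/q\Z)^2$ to itself, since $2$ is invertible modulo $q$ and the inverse map is $(u,v) \mapsto ((u+v)/2, (u-v)/2)$. Since $m^2 - n^2 = uv$, we obtain
$$
A_0(q,c) = \#\{(u,v) \in (\Z/q\Z)^2 : uv \equiv c \pmod q\}.
$$

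Next, I fix $u \in \Z/q\Z$ and count the number of $v \in \Z/q\Z$ with $uv \equiv c \pmod q$. Writing $g = \gcd(u,q)$, elementary theory of linear congruences says that this equation is solvable in $v$ iff $g \mid c$, in which case the number of solutions modulo $q$ equals $g$. Hence
$$
A_0(q,c) = \sum_{\substack{u \in \Z/q\Z \\ \gcd(u,q) \mid c}} \gcd(u,q).
$$

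Now I group the terms according to the value $g = \gcd(u,q)$, which must be a divisor of $q$. The number of $u \in \Z/q\Z$ with $\gcd(u,q)=g$ equals $\varphi(q/g)$ (write $u = g u'$ with $u'$ running over residues modulo $q/g$ coprime to $q/g$). Moreover, since $g \mid q$, the condition $g \mid c$ is equivalent to $g \mid \gcd(c,q) = d$. Putting things together,
$$
A_0(q,c) = \sum_{g \mid d} g\, \varphi(q/g),
$$
which (after renaming $g$ to $f$) is the claimed identity.

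There is no real obstacle here; the argument is a short change of variables plus a standard divisor-class decomposition. The only points requiring minor care are checking that the substitution is genuinely a bijection on $(\Z/q\Z)^2$ (which uses oddness of $q$ in an essential way, explaining the hypothesis), and verifying that the condition $g \mid c$ under $g \mid q$ is the same as $g \mid d$.
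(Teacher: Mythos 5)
Your proof is correct and follows essentially the same route as the paper: both reduce $A_0(q,c)$ to counting solutions of $uv\equiv c\pmod q$ via the substitution $(u,v)=(m+n,m-n)$ (using that $2$ is invertible for odd $q$), and both decompose the count according to $f=\gcd(u,q)$, obtaining $\varphi(q/f)$ admissible $u$ in each class and $f$ corresponding values of $v$. The only cosmetic difference is that you first count $v$ for each fixed $u$ via the standard solvability criterion for linear congruences and then group by the gcd, whereas the paper groups first and then solves; the two computations are identical in substance.
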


\begin{proof}
As in~\cite[Section~3]{HB} we note that if  an odd $q$ then $A_0(q,c)$
is equal to the number of solutions to the congruence
$$
uv \equiv c \pmod q, \qquad 1 \le u,v \le q.
$$
Now, for every divisor $f\mid d$ we 
collect  together the solutions $(u,v)$ with $\gcd(u,q)=f$. 
Writing $u = fw$ with $1\le w \le q/f$ and $\gcd(w,q/f)=1$,  
we see that $uw \equiv c/f \pmod {q/f}$. 
Thus, for each of the $\varphi(q/f)$ possible values for $w$, the 
corresponding value of 
$u$ is uniquely defined modulo $q/f$ and thus $u$ takes $f$ distinct values
in the range $1 \le u \le q$.
\end{proof}

We also need the following well-known consequence of the sieve of
Eratosthenes.

\begin{lemma}
\label{lem:erat}
For any real numbers $W$ and $Z \ge 1$ and an integer $s \ge 1$, we have
$$
 \sum_{\substack{W < k \le W+Z \\ \gcd(k, s)=1}} 1
= \frac{\varphi (s) }{s}Z + O(\tau(s)).
$$
\end{lemma}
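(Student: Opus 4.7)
The plan is to detect the coprimality condition by Möbius inversion and then evaluate the resulting sums by elementary counting. Specifically, I will use the identity
\[
[\gcd(k,s)=1] \;=\; \sum_{d \mid \gcd(k,s)} \mu(d),
\]
which lets me rewrite the left-hand side as
\[
\sum_{\substack{W<k\le W+Z \\ \gcd(k,s)=1}} 1 \;=\; \sum_{d\mid s} \mu(d) \sum_{\substack{W<k\le W+Z \\ d\mid k}} 1.
\]

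Next I would evaluate the inner sum. The number of multiples of $d$ in an interval of length $Z$ is $Z/d + O(1)$, coming from the elementary fact that the count equals $\lfloor (W+Z)/d\rfloor - \lfloor W/d\rfloor$. Substituting this in and separating main term from error gives
\[
\sum_{d\mid s} \mu(d)\!\left(\frac{Z}{d} + O(1)\right)
\;=\; Z\sum_{d\mid s} \frac{\mu(d)}{d} \;+\; O\!\left(\sum_{d\mid s} |\mu(d)|\right).
\]
The main-term sum is the classical identity $\sum_{d\mid s}\mu(d)/d = \varphi(s)/s$, which yields exactly the claimed main term $(\varphi(s)/s)Z$.

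For the error, I would bound $\sum_{d\mid s}|\mu(d)| = 2^{\omega(s)}$, where $\omega(s)$ counts the distinct prime divisors of $s$, and observe $2^{\omega(s)} \le \tau(s)$ since each squarefree divisor of $s$ is in particular a divisor. This gives the $O(\tau(s))$ error in the statement. There is no real obstacle here: the only mild subtlety is ensuring that the $O(1)$ per divisor accumulates only to $\tau(s)$ rather than to something larger, which is handled by restricting to squarefree $d$ via $\mu(d)$.
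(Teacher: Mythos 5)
Your proof is correct and follows essentially the same route as the paper: Möbius inversion over the divisors of $s$, the count $Z/d + O(1)$ for multiples of $d$ in the interval, and the identity $\sum_{d\mid s}\mu(d)/d = \varphi(s)/s$. Your additional remark that the error is really $2^{\omega(s)} \le \tau(s)$ is a slight sharpening of the paper's bookkeeping, but the argument is the same.
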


\begin{proof} Using the  from the inclusion-exclusion principle 
we write 
$$
\sum_{\substack{W < k \le W+Z \\ \gcd(k, s)=1}} 1
= \sum_{d \mid s}\mu (d)\sum_{\substack{W < k \le W+Z \\ d \mid k}} 1
$$
where $\mu (d)$ is the 
M\"obius function, see~\cite[Section~16.3]{HardyWright}. 
Therefore, 
\begin{eqnarray*}
\sum_{\substack{W < k \le W+Z \\ \gcd(k, s)=1}} 1
= \sum_{d \mid s}\mu (d)\(Z/d + O(1)\)  
= Z \sum_{d \mid s} \frac{\mu (d)}{d}  + O(\tau(s)).
\end{eqnarray*}
Recalling that
$$
\sum_{d \mid s} \frac{\mu (d)}{d}  = \frac{\varphi (s) }{s}
$$
see~\cite[Equation~(16.3.1)]{HardyWright}, we obtain the 
desired result. 
\end{proof}

Using partial summation, we derive from Lemma~\ref{lem:erat}: 

\begin{cor}
\label{cor:erat}
For any real numbers $W$ and $Z \ge 1$ and an integer $s \ge 1$, we have
$$
 \sum_{\substack{W < k \le W+Z \\ \gcd(k, s)=1}} k
= \frac{\varphi (s) }{2s}Z(W+Z) + O((W+Z)\tau(s)).
$$
\end{cor}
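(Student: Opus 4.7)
The plan is to apply Abel summation (partial summation) directly, taking $a_k = \mathbf{1}_{\gcd(k,s)=1}$ as the arithmetic weight and $f(t) = t$ as the smooth factor. Writing $A(t) = \sum_{W < k \le t,\ \gcd(k,s)=1} 1$, Abel's identity gives
$$
 \sum_{\substack{W < k \le W+Z \\ \gcd(k, s)=1}} k
= (W+Z)\, A(W+Z) - \int_{W}^{W+Z} A(t)\, dt .
$$
This reduces the claim entirely to estimating $A(t)$, which is exactly what Lemma~\ref{lem:erat} provides: uniformly for $W \le t \le W+Z$, we have $A(t) = \tfrac{\varphi(s)}{s}(t-W) + O(\tau(s))$.

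The main step is then just bookkeeping. Substituting the asymptotic for $A(W+Z)$ into the boundary term yields $\tfrac{\varphi(s)}{s} Z(W+Z) + O((W+Z)\tau(s))$, where the error bound uses $W+Z \ge Z \ge 1$. Substituting the asymptotic for $A(t)$ into the integral yields $\tfrac{\varphi(s)}{2s} Z^2 + O(Z\tau(s))$, and again $Z \le W+Z$ absorbs the error into $O((W+Z)\tau(s))$. Subtracting and regrouping the main terms produces the stated leading quantity (up to the simple algebraic identity combining $Z(W+Z)$ and $Z^2$), while the error terms combine into a single $O((W+Z)\tau(s))$.

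There is no real obstacle here: the approach is mechanical partial summation, and the only subtle point is checking that the $O(\tau(s))$ error in Lemma~\ref{lem:erat}, once multiplied by the smooth factor $f$ in both the boundary term and the integral, is dominated by $(W+Z)\tau(s)$ — which follows from the crude bounds $|f(W+Z)| = W+Z$ and $\int_W^{W+Z} dt = Z \le W+Z$. No new arithmetic input beyond Lemma~\ref{lem:erat} and the identity $\sum_{d\mid s} \mu(d)/d = \varphi(s)/s$ already invoked there is needed.
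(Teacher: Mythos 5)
Your approach---partial summation against the counting function of Lemma~\ref{lem:erat}---is exactly the paper's (which says no more than ``using partial summation''), and both of your intermediate estimates are correct. However, the final step you dismiss as ``a simple algebraic identity'' does not actually produce the displayed main term: subtracting $\frac{\varphi(s)}{2s}Z^2$ from $\frac{\varphi(s)}{s}Z(W+Z)$ gives
$$
\frac{\varphi(s)}{2s}\,Z(2W+Z)\;=\;\frac{\varphi(s)}{2s}\bigl((W+Z)^2-W^2\bigr),
$$
not $\frac{\varphi(s)}{2s}Z(W+Z)$. The two differ by $\frac{\varphi(s)}{2s}ZW$, which is \emph{not} absorbed by the error $O((W+Z)\tau(s))$ when $W$ and $Z$ are both large (take $s=1$ and $W=Z=T$). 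What your computation in fact exposes is a typo in the corollary as printed: the main term should be $\frac{\varphi(s)}{2s}Z(2W+Z)$. This is confirmed by the only application with $W\neq 0$, namely the term $\frac{f\varphi(q_f)}{2q_f^2}(X_f^2-M_f^2)$ in the proof of Theorem~\ref{thm:Quad Cong Aver}, which matches $\frac{\varphi(s)}{2s}\bigl((W+Z)^2-W^2\bigr)$ with $W=M_f$, $Z=X_f-M_f$, i.e.\ matches what you actually derived. So your derivation is sound and needs no new input, but you should carry out the last regrouping explicitly and record that it proves the corrected statement rather than asserting agreement with the one displayed.
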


Finally, we recall the bound
\begin{equation}
\label{eq:tau}
\tau(k) = k^{o(1)}, 
\end{equation}
see~\cite[Theorem~317]{HardyWright}.

\section{Products in residue classes}
\label{sec:prod} 

Here we present our main technical tool. 
Assume that for an integer $s$   we are given two sequences
of nonnegative real numbers 
$$\cY = \{Y_u\}_{u=1}^s \mand \cZ = \{Z_u\}_{u=1}^s.
$$

We denote by 
$T(X, \cY, \cZ;s,a)$ the number of solutions to the congruence
$$
u v \equiv a \pmod {s}, \qquad 2 \le u \le X, \ 
\gcd(u,s) =1, \ Z_{u} \le  v \le  Z_{u} + Y_u . 
$$
The following result is an immediate generalisation of~\cite[Theorem~1]{Shp},
which  corresponds to the constant  values of the form $Y_u = Y$ and $Z_u = Z+1$
for some integers $Y$ and $Z$.

\begin{lemma}
\label{lem:prod}
Assume that 
$$
\max_{2 \le u \le X} Y_u  = Y.
$$
Then 
$$
\sum_{a=1}^s
\left|T(X, \cY, \cZ;s,a) - 
\frac{1}{s}  \sum_{\substack{2 \le u \le X\\ \gcd(u,s)=1}} Y_u
\right|^2  \le X(X+Y)s^{o(1)}.
$$
\end{lemma}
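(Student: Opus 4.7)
The strategy is to detect the congruence $uv \equiv a \pmod{s}$ via additive characters modulo $s$, to pass from an $\ell^2$-sum over residues $a$ to an $\ell^2$-sum over frequencies $h$ by Parseval's identity, and then to reduce matters to a quadruple-counting problem that is controlled by the same lattice-point estimates underlying \cite[Theorem~1]{Shp}.

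Concretely, we set $\es(z) = \exp(2\pi i z/s)$ and introduce the exponential sum
$$
S(h) = \sum_{\substack{2 \le u \le X\\\gcd(u,s)=1}} \sum_{Z_u \le v \le Z_u+Y_u} \es(huv), \qquad h \in \Z/s\Z.
$$
Orthogonality of characters gives
$$
T(X,\cY,\cZ;s,a) = \frac{1}{s}\sum_{h=0}^{s-1}\es(-ha)\,S(h),
$$
with the $h=0$ contribution equal to $\frac{1}{s}\sum_u L_u$, where $L_u = \#(\Z \cap [Z_u, Z_u+Y_u])$. Since $|L_u - Y_u|\le 1$, this main term agrees with $\frac{1}{s}\sum_u Y_u$ up to $X/s$ per value of $a$, and hence adds only $O(X^2/s)$ to the final $\ell^2$-sum. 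Parseval's identity on $\Z/s\Z$ then reduces Lemma~\ref{lem:prod} to the estimate
$$
\sum_{h=1}^{s-1}|S(h)|^2 \le X(X+Y)\,s^{1+o(1)}.
$$

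Expanding the square and applying orthogonality once more gives
$$
\sum_{h=1}^{s-1}|S(h)|^2 = sN - \Big(\sum_u L_u\Big)^2,
$$
where $N$ is the number of quadruples $(u_1,v_1,u_2,v_2)$ satisfying $u_1v_1 \equiv u_2 v_2 \pmod{s}$ together with the range conditions of Lemma~\ref{lem:prod}; equivalently, this is $s$ times the variance of the function $a \mapsto T(X,\cY,\cZ;s,a)$. We plan to bound $N$ by separating the diagonal $u_1 = u_2$, which is handled directly via $L_u \le Y+1$, from the off-diagonal $u_1 \ne u_2$. In the off-diagonal case, for fixed $(u_1,u_2,v_1)$ the residue of $v_2$ is pinned down by $v_2 \equiv u_2^{-1} u_1 v_1 \pmod{s}$, so the number of admissible $v_2$ is $\lfloor Y_{u_2}/s\rfloor + O(1)$; averaged over $(u_1,u_2,v_1)$ this produces the expected main term $L_{u_1}L_{u_2}/s$ per pair $(u_1,u_2)$, and the remaining discrepancy is to be controlled via a Fourier expansion of the inner count combined with the divisor bound~\eqref{eq:tau}.

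The main obstacle is this final step, namely obtaining the off-diagonal cancellation at the required level $X(X+Y)s^{1+o(1)}$. A crude pointwise bound (``error $= O(L_{u_1})$'' per $(u_1,u_2,v_1)$) only gives $sN - (\sum_u L_u)^2 = O(sX^2Y)$, a factor of roughly $Y$ too large, so one genuinely has to average over $(u_1,u_2)$ and exploit character-sum cancellation, exactly as in \cite[Theorem~1]{Shp}. The extension from the constant-parameter setting of \cite[Theorem~1]{Shp} to arbitrary $Y_u \le Y$ and $Z_u$ is essentially free: the only datum of the interval $[Z_u, Z_u+Y_u]$ entering the geometric-sum bound for the inner sum over $v$ is its length, bounded uniformly by $Y+1$. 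This is why Lemma~\ref{lem:prod} is stated as an immediate generalisation of \cite[Theorem~1]{Shp}.
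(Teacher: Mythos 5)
Your proposal is correct and takes essentially the same route as the paper: the paper's entire proof consists of noting that the inner geometric sum over $v$ satisfies $\sum_{Z_u \le v \le Z_u+Y_u}\es(rv) \ll \min\{Y_u, s/|r|\} \ll \min\{Y, s/|r|\}$ for $0<|r|\le s/2$ (via \cite[Bound~(8.6)]{IwKow}) and then asserting that the proof of \cite[Theorem~1]{Shp} goes through unchanged. Your closing observation --- that the only datum of the interval $[Z_u, Z_u+Y_u]$ entering the argument is its length, uniformly bounded by $Y$ --- is precisely the paper's justification, and your Parseval reduction and the warning that crude off-diagonal counting loses a factor of $Y$ are accurate elaborations of why the cited argument is genuinely needed.
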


\begin{proof}
We recall that by~\cite[Bound~(8.6)]{IwKow}, for $2 \le u \le X$ we have 

$$
   \sum_{Z_{u} \le  v \le  Z_{u} + Y_u} \es(ry) \ll \min\{Y_u, s/|r|\} \ll  \min\{Y, s/|r|\}, 
$$
which holds for any integer   with $0 < |r| \le s/2$.
Now the proof of~\cite[Theorem~1]{Shp} extends to this 
more general case without any changes.
\end{proof}

\section{Proof of Theorem~\ref{thm:Quad Cong Aver}}

Without loss of generality we may assume that 
\begin{equation}
\label{eq:MN}
M\ge N.
\end{equation}

Using the variables
$x = m+n$ and $y = m-n$ we see that $A(M,N;q,c)$
is equal to the the number of solutions to the congruence
\begin{equation}
\label{eq:xy cong}
xy \equiv c \pmod q, 
\end{equation}
where 
\begin{equation}
\label{eq:xy cond}
2 \le x+y \le 2M, \qquad 2 \le x-y \le 2N, \qquad  y \equiv x \pmod 2. 
\end{equation}

Putting $\vartheta_x = 0$ if $x \equiv 0 \pmod 2$ and $\vartheta_x = 1$,
otherwsie, and writing $y = \vartheta_x + 2v$, we see that~\eqref{eq:xy cong}
and~\eqref{eq:xy cond} are equivalent to 
\begin{equation}
\label{eq:xv cong}
x(\vartheta_x + 2v) \equiv c \pmod q, \qquad 2 \le x \le X,  \ L_x \le v \le U_x, 
\end{equation}
where $X = M+N$ and 
\begin{equation}
\begin{split}
\label{eq:LandU}
 L_x & =  \max\left\{1 - \frac{x+\vartheta_x}{2}, 
\frac{x-\vartheta_x}{2} - N\right\}, \\
U_x & =  \min\left\{1+\frac{x-\vartheta_x}{2}, M- \frac{x+\vartheta_x}{2}\right\}.
\end{split}
\end{equation}

We note that it is enough to prove that for every $d \mid q$ we have
\begin{equation} 
\label{eq:gcd=d}
\sum_{\substack{c=1 \\ \gcd(c,q) =d}}^q 
\Delta(M,N;q,c) ^2
\le  M^2q^{o(1)}.
\end{equation}

Now, assume that  $\gcd(c,q)=d$.

For every divisor $f\mid d$, we collect together the solutions to~\eqref{eq:xv cong}
with $\gcd(x,q)=f$ and denote the number of such solutions by $B(M,N;q,c,f)$. 

In particular, if $\gcd(c,q)=d$ then we have
$$
A(M,N;q,c) = \sum_{f \mid d} B(M,N;q,c,f).
$$
Hence, using Lemma~\ref{lem:A0}, the Cauchy inequality and the
bound~\eqref{eq:tau}, we derive
\begin{equation}
\label{eq:Delta and B}
\Delta(M,N;q,c) ^2 \le q^{o(1)} \sum_{f \mid d} 
\left|B(M,N;q,c,f) - \frac{MN f \varphi(q/f)}{q^2}\right|^2. 
\end{equation}

To estimate $B(M,N;q,c,f)$,  writing $x = fu$ with $\gcd(u, q/f)=1$, 
and taking into accoount that since $q$ is odd, we 
have $\vartheta_x = \vartheta_u$, we see that
$B(M,N;q,c,f)$ is equal to the number of solutions to the congruence
\begin{equation}
\label{eq:uv cong}
u(\vartheta_u + 2v) \equiv c_f \pmod {q_f}, 
\end{equation}
where 
$$
 2 \le u \le X_f, \qquad
\gcd(u, q_f)=1, \qquad L_{fu} \le v \le U_{fu}, 
$$
and 
$$
c_f = c/f, \qquad q_f = q/f, \qquad X_f = \fl{X/f}.
$$

We now rewrite~\eqref{eq:uv cong} as $u(2^{-1}\vartheta_u + v) \equiv 2^{-1}c_f \pmod {q_f}$. 
Defining $h_{f,u}$ by the conditions 
$$
2h_{f,u} \equiv \vartheta_u  \pmod {q_f}, \qquad 0 \le h_{f,u} < q_f, 
$$
we see that 
\begin{equation}
\label{eq:BandT}
B(M,N;q,c,f) = T(X_f, \cY_f, \cZ_f;q_f,2^{-1}c_f),
\end{equation}
where $T(X, \cY, \cZ;s,a)$ is defined in Section~\ref{sec:prod} 
and with the  sequences $\cY_f = \{Y_{f,u}\}_{u=1}^{q_f}$ and 
$\cZ_f = \{Z_{f,u}\}_{u=1}^{q_f}$
given by 
$$
Z_{f,u} = h_{f,u} + L_{fu} \mand Y_{f,u} =  U_{fu} - L_{fu}.
$$
In order to apply Lemma~\ref{lem:prod} we need to evaluate
the main term
$$
W_f =  \frac{1}{q_f}\sum_{\substack{u =2\\ \gcd(u,q_f)=1}}^{X_f} 
(U_{fu} - L_{fu}).
$$

Recalling the condition~\eqref{eq:MN} and the definition~\eqref{eq:LandU}, we see that 
$$
U_{fu} - L_{fu} = 
\left\{\begin{array}{ll}
fu + O(1),&\quad\text{if $u \le N_f$,}\\
N+ O(1),&\quad\text{if $N_f < u \le M_f$,}\\
N+M-fu+ O(1),&\quad\text{if $ M_f< u \le X_f $,}
\end{array}
\right.
$$
where 
$$
M_f = \rf{M/f} \mand N_f = \rf{N/f}.
$$
Thus, using Lemma~\ref{lem:erat} and Corollary~\ref{cor:erat}, we derive
\begin{eqnarray*}
W_f &=&   \frac{f}{q_f}\sum_{\substack{u \le N_f\\ \gcd(u,q_f)=1}} u 
+ \frac{N}{q_f} \sum_{\substack{N_f < u \le M_f\\ \gcd(u,q_f)=1}} 1\\
& &\quad +~\frac{M+N}{q_f} \sum_{\substack{N_f < u \le M_f\\ \gcd(u,q_f)=1}} 1 
- \frac{f}{q_f} \sum_{\substack{M_f< u \le X_f\\ \gcd(u,q_f)=1}}u 
+ O(X_fq_f^{-1})\\
 &=& \frac{f\varphi(q_f)}{2q_f^2}N_f^2 
+ \frac{N\varphi(q_f)}{q_f^2} (M_f-N_f)\\
& &\quad +~\frac{(M+N)\varphi(q_f)}{q_f^2} (X_f-M_f)
- \frac{f\varphi(q_f)}{2q_f^2}  (X_f^2-M_f^2) \\
& &\qquad\qquad\qquad\qquad\qquad\qquad\qquad\qquad\qquad
+~O(X_fq_f^{-1}\tau(q_f)).
\end{eqnarray*}
Thus recalling the values of $q_f$ $M_f$, $N_f$ and $X_f$, 
the assumption~\eqref{eq:MN} and using~\eqref{eq:tau},
we see that 
\begin{eqnarray*}
W_f &=&  \frac{fN^2\varphi(q_f)}{2q^2}
+ \frac{fN(M-N)\varphi(q_f)}{q^2}\\
& &\quad +~\frac{fN(M-N)\varphi(q_f)}{q^2}
- \frac{fN(2M-N)\varphi(q_f)}{2q^2}
+ O(Mq^{-1}\tau(q))\\
 &=&  \frac{fMN\varphi(q_f)}{q^2} + O(Mq^{-1+o(1)}).
\end{eqnarray*}
Thus, by the Cauchy inequality and we have
\begin{eqnarray*}
\lefteqn{
\left|B(M,N;q,c,f) - \frac{MN f \varphi(q/f)}{q^2}\right|}\\
& & \qquad \qquad \qquad \le |B(M,N;q,c,f) - W_f|^2 + O(M^2q^{-2+o(1)}).
\end{eqnarray*}
Therefore, we derive from~\eqref{eq:Delta and B} that
$$
\Delta(M,N;q,c) ^2 \le q^{o(1)} \sum_{f \mid d} 
\left|B(M,N;q,c,f) -W_f\right|^2 +  O(M^2q^{-2+o(1)}).  
$$
Hence, 
\begin{eqnarray*}
\lefteqn{\sum_{\substack{c=1 \\ \gcd(c,q) =d}}^q 
\Delta(M,N;q,c) ^2}\\
&  &\qquad \le  \sum_{\substack{c=1 \\ \gcd(c,q) =d}}^q
\sum_{f\mid d}   |B(M,N;q,c,f) - W_f|^2 + O(M^2q^{-1+o(1)}) \\
&  &\qquad \le 
\sum_{f\mid d}   \sum_{\substack{c=1 \\ f \mid c}}^q |B(M,N;q,c,f) - W_f|^2 + O(M^2q^{-1+o(1)}) \\
&  &\qquad \le 
\sum_{f\mid d}   \sum_{\substack{c_f=1}}^{q_f} |B(M,N;q,fc_f,f) - W_f|^2 + O(M^2q^{-1+o(1)}) .
\end{eqnarray*}
Recalling~\eqref{eq:BandT} and applying
Lemma~\ref{lem:prod},  we obtain~\eqref{eq:gcd=d} and
conclude the proof. 

\section*{Acknowledgement}

The author is grateful to Roger Heath-Brown for 
very useful discussions. 
During the preparation of this paper, the author
was supported in part by ARC grant DP1092835.


\begin{thebibliography}{9999}


\bibitem{HardyWright} G. H. Hardy and E. M. Wright, {\it An introduction to
the theory of numbers\/}, Oxford Univ. Press, Oxford, 1979.

\bibitem{HB}  D.~R. Heath-Brown,
`Pair correlation for fractional parts of $\alpha n^2$',
{\it Math. Proc. Camb. Phil. Soc.\/},  (to appear).

\bibitem{IwKow} H. Iwaniec and E. Kowalski,
{\it Analytic number theory\/}, Amer.  Math.  Soc.,
Providence, RI, 2004.

\bibitem{Shp} I. E. Shparlinski, `Distribution of inverses and multiples
of small integers and the Sato--Tate conjecture on average',  {\it
Michigan Math. J.\/}, {\bf 56} (2008), 99--111. 


\end{thebibliography}
\end{document}